\def\al{\alpha}
\def\la{\lambda}
\def\La{\Lambda}
\def\Z{{\mathbbm Z}}
\def\Q{{\mathbbm Q}}
\def\R{{\mathbbm R}}
\def\C{{\mathbbm C}}
\def\co{\colon}
\newcommand{\ringone}{\Bbbk}
\newcommand{\ringtwo}{k}
\newcommand{\tLa}{\widetilde{\La}}
\newcommand{\tR}{\widetilde{R}}
\DeclareMathOperator{\Hom}{Hom}
\DeclareMathOperator{\aff}{aff}
\DeclareMathOperator{\Span}{Span}
\theoremstyle{definition}
\newtheorem{thm}{Theorem}[section]
\newtheorem{defn}{Definition}[section]
\newtheorem{remark}{Remark}[section]
\newtheorem{lem}{Lemma}[section]
\newtheorem{cor}{Corollary}[section]
\newtheorem{ex}{Example}[section]
\title[]{Coxeter groups preserving orthants}
\author[]{Ben Elias}
\address{University of Oregon.}
\email{belias@uoregon.edu}
\begin{document}
	
\begin{abstract}
In this short, elementary note we prove that if a faithful reflection representation of a Coxeter group preserves an orthant, then that Coxeter group is a product of symmetric
groups acting on its natural permutation representation. We also prove an affine analogue of this statement, where an orthant is preserved modulo an invariant sublattice. As a
consequence, the existence of two different versions of the quantum geometric Satake equivalence is a purely type $A$ phenomenon. \end{abstract}

\maketitle

\section{Introduction}

Let $T$ be the diagonal torus of $GL_n(\C)$, and let $\{x_1, \ldots, x_n\}$ be a basis of its character lattice $\La = \Z^n$. The Weyl group $W = S_n$ acts on $\La$ by
permuting this basis, a seemingly innocent point which has some interesting consequences. An \emph{orthant} is the set of non-negative linear combinations of a basis; it is the generalization of a quadrant beyond two dimensions. Obviously, if a group action on a permutes a basis of a $\Z$-module, then it preserves an orthant.

Let $R_H$ denote the polynomial ring $\Z[x_1, \ldots, x_n]$. An element $\la \in \La$ can be interpreted as a linear polynomial in $R_H$. The ring $R_H$ is the $T$-equivariant
cohomology of a point, and inherits a natural $W$ action. Meanwhile, let $R_K$ denote the group algebra $\Z[\La]$, which we think of as being spanned by elements $e^{\la}$ for $\la
\in \La$. The ring $R_K$ is the $T$-equivariant $K$-theory of a point. The natural action of $W$ on $R_K$ is $w(e^{\la}) = e^{w(\la)}$. Moreover, $R_K$ is isomorphic to a Laurent
polynomial ring $\Z[y_1^{\pm 1}, \ldots, y_n^{\pm 1}]$, where $y_i = e^{x_i}$.

Let $p = x_1 \cdots x_n$. What is remarkable here is that the map $\epsilon \co x_i \mapsto e^{x_i} = y_i$ extends to a $W$-equivariant ring homomorphism $R_H \to R_K$, which makes $R_K$ isomorphic to the
localization $R_H[p^{-1}]$. Said another way, the lattice $\La$ contains a $W$-invariant orthant, and consequently the ring $R_K$ contains a $W$-invariant (non-Laurent) polynomial subring $\Z[y_1, \ldots, y_n]$ (which is isomorphic to $R_H$).

\begin{remark} It is not reasonable to call $\epsilon$ an ``exponentiation'' map, since it does not send $\la$ to $e^{\la}$ in general; for example, the
simple root $x_1 - x_2$ is sent to $e^{x_1} - e^{x_2}$, which does not agree with $e^{x_1 - x_2}$. \end{remark}

Let us briefly motivate why we care about the ring isomorphism $R_K \cong R_H[p^{-1}]$. Using the ring $R_H$ one can construct the category of Soergel bimodules \cite{Soer07}, which
form a full monoidal subcategory of graded $(R_H, R_H)$-bimodules. Soergel bimodules categorify the Hecke algebra and play a major role in geometric representation theory. More
recently, the author and Williamson have introduced $K$-theoretic Soergel bimodules \cite{EWKSBim}, see also \cite{Eberhardt}. In \cite{EWKSBim} we prove that, for $GL_n$, the
category of $K$-theoretic Soergel bimodules is just the localization of the category of ordinary Soergel bimodules which inverts $p$. Since none of the roots $x_i - x_j$ divide $p$,
this localization is fairly harmless, see \cite[\S 5.1]{EWKSBim}.

However, this phenomenon does not even extend from $GL_n(\C)$ to $SL_n(\C)$. There is no $W$-invariant orthant in the weight lattice of $SL_n$. For example, letting $\varpi$ denote
the highest weight of the standard representation, it is easy to verify that the $\Z_{\ge 0}$-span of the $W$-orbit of $\varpi$ is the entire weight lattice.

We could not find any discussion of $W$-invariant orthants in the literature. The first goal of this note is to point out how special $GL_n(\C)$ is in this
regard. Preserving an orthant is a priori a weaker condition than permuting a basis, but we prove that if $W$ is a Coxeter group acting by reflections on a lattice $\La$, and $\La$
has a $W$-invariant orthant, then $W$ permutes a basis, and the simple reflections act as transpositions. That is, up to isomorphism, the action of $W$ factors through the usual
action of $S_n$ on $\Z^n$.

\begin{remark} \label{rmk:nonfaithful} As a consequence, if $W$ acts faithfully then it is a parabolic subgroup of $S_n$; that is, $W$ is a product of type $A$ Weyl groups, and $\La$ is the direct sum of the corresponding permutation representations. If $W$ does not act faithfully there are many more options. To give one example, $S_3$ is a quotient of the Dihedral group of type $I_2(3k)$ for any $k \ge 1$. To give another example, $S_4$ is a quotient of the affine Weyl group of type $\tilde{A}_2$, where the three simple reflections are sent to the transpositions $(12)$, $(13)$, and $(14)$. \end{remark}

The second goal concerns a similar phenomenon in affine type. 

Let $\tLa$ be the weight lattice for the affine lie algebra $\widetilde{\mathfrak{gl}}_n$. It has basis $\{x_1, \ldots, x_n, \hbar\}$. The affine type $A$ Weyl group $W_{\aff}$ acts
on $\tLa$ as follows. For $1 \le i \le n-1$, the simple reflection $s_i$ sends $x_i \mapsto x_{i+1}$ as usual, whereas the affine reflection $s_0$ sends $x_n$ to $x_1 -\hbar$. The
element $\hbar$ is $W_{\aff}$-invariant. We call this the \emph{affine permutation representation} of $W_{\aff}$. Note that $\hbar$ is the sum of the simple roots, which are $\al_i
= x_i - x_{i+1}$ for $1 \le i \le n-1$, and $\al_0 = x_n - x_1 + \hbar$. Now we can define a polynomial ring $\tR_H := \Z[x_1, \ldots, x_n, \hbar]$ and a group algebra $\tR_K :=
\Z[\tLa]$.

There is no $W_{\aff}$-invariant orthant inside $\tLa$. However, there is the next best thing, an invariant ``almost-orthant:'' the set of elements of the form
\begin{equation} \sum a_i x_i + c \hbar, \qquad a_i \in \Z_{\ge 0}, c \in \Z, \end{equation} is preserved by $W_{\aff}$. Let $q = e^{-\hbar} \in \tR_K$. Since $q$ is
$W_{\aff}$-invariant, we treat it as an invertible scalar. Let $y_i := e^{x_i}$ as before. Then $\tR_K$ admits a polynomial subring $\tR_q := \Z[q,q^{-1}][y_1, \ldots, y_n]$, where
we think of it as a polynomial ring only because we have changed the base ring from $\Z$ to $\Z[q,q^{-1}]$. We have $s_i(y_i) = y_{i+1}$ for $1 \le i \le n-1$, and $s_0(y_n) = q y_1$. This time $\tR_q$ is not isomorphic to $\tR_H$.

Soergel bimodules for $\tR_H$ play a role in the geometric Satake equivalence. Soergel bimodules for $\tR_q$ were first studied in \cite{EQuantumI}, where they were shown to
participate in a quantum analogue of the geometric Satake equivalence. At the time, this $q$-deformation was mysterious, and it was not clear how to generalize it beyond type $A$,
or if such a generalization were possible. The nilHecke algebra associated to $\tR_q$ has also been studied, and unexpected features arise when $q$ is specialized to a root of unity, see \cite{EJY1}.

More recently, Soergel bimodules for $\tR_K$ were studied in \cite{EWKSBim}, where they are conjectured to participate in an (a priori different) quantum analogue of the geometric
Satake equivalence (which generalizes to any type). It was observed there that $\tR_K \cong \tR_q[p^{-1}]$ for $p=y_1 \cdots y_n$, and thus Soergel bimodules for $\tR_K$ are a
harmless localization of Soergel bimodules for $\tR_q$ (but not for $\tR_H$). Thus the mysterious quantum Satake of \cite{EQuantumI} actually arises as a polynomial
piece inside the $K$-theoretic Satake of \cite{EWKSBim}.

We also prove in this note that almost-orthants are special. Let $W$ be a Coxeter group acting on a lattice $\La$ by reflections. If $\{x_1, \ldots, x_n, \hbar_1, \ldots, \hbar_m\}$
is a basis of $\La$ for which $\hbar_j$ is fixed by $W$ for all $j$, then we call the subset $\Z_{\ge 0} \cdot \{x_i\} + \Z \cdot \{\hbar_j\}$ an \emph{almost-orthant}. If $\La$ has
an invariant almost-orthant then $W$ permutes the set $\{x_i\}$
modulo the span of $\{\hbar_j\}$, with simple reflections acting as transpositions. Moreover, if $W$ acts faithfully then $W$ is a product of finite and affine type $A$ Weyl groups
acting on the direct sum of their (affine) permutation representations, up to some possible modifications to the invariant part of the lattice, see Theorem \ref{thmwithmods} for details.

\begin{remark} For an example of such a modification, one could take the representation of $\tilde{A}_n \times \tilde{A}_m$ which is a direct sum of their affine permutation representations, and then identify the sum of the simple roots for $\tilde{A}_n$ with the sum of the simple roots for $\tilde{A}_m$. \end{remark}

In conclusion, outside of type $A$, there is no polynomial piece inside $K$-theoretic Satake, i.e. there is no analogue of $\tR_q$, and the approach of \cite{EQuantumI} should not
generalize.

\begin{remark} For the root lattice of $SL_n$ one can define analogues of both $\tR_q$ and $\tR_K$. The former is not a polynomial subring of the other. However, both are subrings of $\tR_K(GL_n)$, and their categories of Soergel bimodules can be related by way of Soergel bimodules for affine $GL_n$. Technically, it is the $SL_n$ analogue of $\tR_q$ which was studied in \cite{EQuantumI}.  \end{remark}
	
\noindent {\bf Acknowledgments.} We would like to thank Rachel Webb for very interesting conversations. The author was supported by NSF grant DMS-2201387, and appreciates the support to his research group from DMS-2039316.

\section{Reflection actions that preserve orthants}

%
%
%

\subsection{Setup} \label{subsec:setup}

A realization is one way to formulate the concept of a linear action of $W$ where $S$ acts by reflections.

\begin{defn} Let $\ringone$ be a commutative domain. Let $V$ be a free $\ringone$-module, and
$V^* := \Hom_{\ringone}(V,\ringone)$ be the dual module. Let $(W,S)$ be a Coxeter system. A \emph{realization} of $(W,S)$ over $\ringone$ is the data of $V$ and, for each $s \in S$, of a \emph{simple
root} $\al_s \in V$ and a \emph{simple coroot} $\al_s^{\vee} \in V^*$. These are requred to satisfy \begin{enumerate} \item $\al_s^\vee(\al_s) = 2$,
\item for $s \ne t$, $\al_s^\vee(\al_t) = 0$ if and only if $m_{st} = 2$, \item the assignment $s(x) := x - \al_s^\vee(x) \cdot \al_s$ defines an $\ringone$-linear action of $W$ on $V$. \end{enumerate}
\end{defn}

Any root datum gives rise to a realization in the obvious way, where one typically chooses $\ringone = \Q$ or $\ringone = \Z$. A realization is \emph{faithful} if the action of $W$ on $V$ is faithful.

\begin{ex} The \emph{permutation realization} of $S_n$ is the free $\ringone$-module with basis $\{x_1, \ldots, x_n\}$. The group $S_n$ acts to permute the basis, with the standard choice of roots and coroots. \end{ex}

\begin{ex} The \emph{affine permutation realization} of type $\tilde{A}_{n-1}$ is the free $\ringone$-module with basis $\{x_1, \ldots, x_n, \hbar\}$, with action and roots as in the introduction. Let $\epsilon_i \in V^*$ be defined by $\epsilon_i(x_j) = \delta_{ij}$ and $\epsilon_i(\hbar) = 0$. Then the coroots are $\al_i = \epsilon_i - \epsilon_{i+1}$ for $1 \le i \le n-1$, and $\al_0 = \epsilon_n - \epsilon_1$. \end{ex}

If $\al_s \ne 0$ for all $s \in S$, then an element $\hbar \in V$ is $W$-invariant if and only if $\al_s^\vee(\hbar) = 0$ for all $s \in S$. Note that $\al_s \ne 0$ if $\ringone$ does not have characteristic $2$, since $\al_s^\vee(\al_s) = 2 \ne 0$.

\begin{defn}
Let $\ringtwo$ be a subring of $\ringone$, and let $V$ be a realization over $\ringone$. We say that $W$ \emph{preserves a (full) $\ringtwo$-lattice} if we can find a basis of $V$ over $\ringone$ such that its $\ringtwo$-span $\La$ is preserved by the action of $W$. We do not assume that the roots live in $\La$. \end{defn}

\begin{defn}
Continue the setup of the previous definition. Suppose that $\ringone$ is a subring of $\R$, and let $\ringtwo_+ := \ringtwo \cap \R_{\ge 0}$ and $\ringtwo_- := \ringtwo \cap \R_{\le 0}$. An \emph{orthant} in a free $\ringtwo$-module $\La$ is the $\ringtwo_+$-span of a $\ringtwo$-basis of $\La$. We say that $W$ \emph{preserves a
$\ringtwo$-orthant} if $W$ preserves a $\ringtwo$-lattice $\La$, and preserves an orthant inside it. \end{defn}


\begin{defn} Continue the setup of the previous definition, and let $W$ preserve a $\ringtwo$-lattice $\La$. An \emph{almost-orthant} in $\La$ is the set of elements of the form
\begin{equation} \label{almostorthant} \sum_i a_i x_i + \sum_j c_j \hbar_j, \qquad a_i \in \ringtwo_+, c_j \in \ringtwo, \end{equation}
where $\{x_i\} \cup \{\hbar_j\}$ is a $\ringtwo$-basis of $\La$, and $\hbar_j$ is $W$-invariant for all $j$. We typically write $X = \{x_i\}$ and $H = \{\hbar_j\}$ for the two parts of this basis, and call the set of vectors \eqref{almostorthant} the \emph{almost-orthant (for $\ringtwo$) spanned by $(X,H)$}. We say that $W$ \emph{preserves a $\ringtwo$-almost-orthant} if $W$ preserves a $\ringtwo$-lattice $\La$, and preserves an almost-orthant inside it. \end{defn}

\subsection{Simple reflections preserving an almost-orthant}


\begin{thm} \label{thm1} Fix a realization $V$ of $(W,S)$ over $\ringone \subset \R$, and suppose that $W$ preserves the almost-orthant for $\ringtwo$ spanned by $(X,H)$. Then for any
simple reflection $s \in S$, either $s$ acts trivially, or there exists $x \ne y \in X$, $b \in \ringtwo_+$, and $\hbar \in \Span_{\ringtwo} H$, such that $s(x) =
by + \hbar$, and $s(z) = z$ for all $z \in X \setminus \{x,y\}$. The element $b$ is invertible, with inverse in $\ringtwo_+$. The quadruple $(x,y,b,\hbar)$ is unique up to the symmetry $(x,y,b,\hbar) \leftrightarrow (y,x,b^{-1},-b^{-1} \hbar)$. \end{thm}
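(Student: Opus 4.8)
The plan is to analyze the action of a single simple reflection $s$ on the almost-orthant $\mathcal{O}$ spanned by $(X,H)$, exploiting the two key structural facts: $s$ acts by the reflection formula $s(v) = v - \al_s^\vee(v)\al_s$, and $s$ permutes the extremal rays of $\mathcal{O}$ in an appropriate sense. First I would pass to the quotient $\overline{V} := V / \Span_\ringone H$; since each $\hbar_j$ is $W$-invariant, $W$ acts on $\overline{V}$, the images $\overline{x_i}$ form a $\ringone$-basis, and $\overline{\mathcal{O}}$ is an honest orthant $\sum \ringtwo_+ \overline{x_i}$ preserved by $W$. So it suffices to understand $s$ on this genuine orthant and then lift. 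The crucial observation is that $s^2 = 1$ together with $s(\mathcal{O}) = \mathcal{O}$ forces $s$ to permute the \emph{edges} (rank-one faces) of the orthant: each edge is $\ringtwo_+ \overline{x_i}$, and its image $\ringtwo_+ s(\overline{x_i})$ must again be an edge, so $s(\overline{x_i}) = b_i \overline{x_{\sigma(i)}}$ for some permutation $\sigma$ and some $b_i \in \ringtwo_+$ (with $b_i^{-1} \in \ringtwo_+$ as well, applying the same to $s^{-1}=s$, which gives $b_i b_{\sigma(i)} = 1$ hence invertibility of $b$ with inverse in $\ringtwo_+$). To see that edges go to edges: an edge is a ray spanned by an element of $\mathcal{O}$ that is not a nontrivial sum of two other elements of $\mathcal{O}$ lying on different rays — i.e. an extreme ray — and both $s$ and its inverse preserve $\mathcal{O}$ and are linear, so they preserve extreme rays. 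Concretely, if $s(\overline{x_i}) = \sum_k c_k \overline{x_k}$ with $c_k \in \ringtwo_+$ and two of the $c_k$ nonzero, applying $s$ again and using linearity leads to a contradiction with all $\overline{x_j} \in \mathcal{O}$ having $c_k$-coefficients summing consistently — the clean way is to observe the matrix of $s$ in the basis $\{\overline{x_i}\}$ has nonnegative entries, its inverse (itself) has nonnegative entries, and a nonnegative matrix with nonnegative inverse over a domain inside $\R$ is a monomial matrix (generalized permutation matrix).

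Next I would use the reflection formula to pin down $\sigma$. Since $s(\overline{v}) - \overline{v} \in \ringone \cdot \overline{\al_s}$, the difference $s(\overline{x_i}) - \overline{x_i} = b_i \overline{x_{\sigma(i)}} - \overline{x_i}$ must be a scalar multiple of the fixed vector $\overline{\al_s}$ for every $i$. If $\sigma$ fixes $i$ then $(b_i - 1)\overline{x_i} \in \ringone\overline{\al_s}$; if also $\sigma$ is nontrivial, say $\sigma(j) = \ell \ne j$, then $b_j \overline{x_\ell} - \overline{x_j}$ is a multiple of $\overline{\al_s}$ involving two distinct basis vectors with nonzero coefficients, so $\overline{\al_s}$ is (up to scalar) $b_j\overline{x_\ell} - \overline{x_j}$; comparing with the $i$-fixed case forces $b_i = 1$. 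A counting/involution argument on $\sigma$: $\sigma^2 = 1$, and all its differences $b_i\overline{x_{\sigma(i)}} - \overline{x_i}$ are proportional to the single vector $\overline{\al_s}$; if $\sigma$ has two distinct nontrivial $2$-cycles $\{a,b\}$ and $\{c,d\}$, then $b_a \overline{x_b} - \overline{x_a}$ and $b_c \overline{x_d} - \overline{x_c}$ are both multiples of $\overline{\al_s}$ but are supported on disjoint pairs of basis vectors, which is impossible unless one of them is zero — and neither is zero since $b_i \ne 0$ and $\sigma(i)\ne i$. Hence $\sigma$ is either the identity (and then every $(b_i-1)\overline{x_i}$ is a multiple of $\overline{\al_s}$, forcing, since at most one $\overline{x_i}$ can be proportional to $\overline{\al_s}$, that either $s$ acts trivially on $\overline{V}$ or exactly one $b_i \ne 1$ — but $\sigma = \mathrm{id}$ with $b_i \ne 1$ would mean $s(\overline{x_i}) = b_i \overline{x_i}$ with $b_i \ne 1$, contradicting $s^2=1$ unless $b_i = -1$, impossible as $b_i \in \ringtwo_+$ positive; so $s$ acts trivially on $\overline V$) or exactly one transposition, say swapping the indices of $x$ and $y$, with $s(\overline{x}) = b\,\overline{y}$, $s(\overline{y}) = b^{-1}\overline{x}$, and $s(\overline{z}) = \overline{z}$ for all other $z \in X$.

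Having established the statement modulo $\Span H$, I would lift to $V$ itself. We get $s(x) = by + \hbar$ for some $\hbar \in \Span_\ringone H$, $s(z) = z + \eta_z$ for $z \in X \setminus\{x,y\}$ with $\eta_z \in \Span_\ringone H$, and $s(y) = b^{-1}x + \hbar'$. Applying $s$ twice: $s(s(z)) = s(z) + \eta_z = z + 2\eta_z$ must equal $z$ (as $s$ is an involution and $\hbar_j$ fixed), so $\eta_z = 0$; thus $s(z) = z$ for $z \notin \{x,y\}$ exactly. Applying $s$ to $s(x) = by + \hbar$ gives $x = b s(y) + \hbar = b(b^{-1}x + \hbar') + \hbar = x + b\hbar' + \hbar$, so $\hbar' = -b^{-1}\hbar$, giving $s(y) = b^{-1}x - b^{-1}\hbar$, which is exactly the claimed symmetric form. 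The only remaining point is that $\hbar \in \Span_\ringtwo H$, not merely $\Span_\ringone H$: this follows because $s(x) \in \mathcal{O}$ (as $x \in \mathcal{O}$ and $s$ preserves $\mathcal{O}$), so writing $s(x) = b y + \sum_j c_j \hbar_j$ the defining membership in the almost-orthant forces $c_j \in \ringtwo$ (indeed $\ringtwo$, since the $x$-part is $0$ and the $y$-part $b$ is in $\ringtwo_+$, the $\hbar$-coefficients are unconstrained-in-sign $\ringtwo$-elements). Uniqueness of $(x,y,b,\hbar)$ up to the stated symmetry is immediate since $\sigma$ is a well-defined permutation and $b$, $\hbar$ are read off from $s(x)$; the only ambiguity is which of the two swapped basis vectors we call $x$ versus $y$, which is precisely the transformation $(x,y,b,\hbar) \leftrightarrow (y,x,b^{-1},-b^{-1}\hbar)$. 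The main obstacle I anticipate is the ``edges go to edges / monomial matrix'' step — making rigorous, over a general commutative domain $\ringtwo \subseteq \R$ rather than a field, the claim that a nonnegative matrix with nonnegative inverse is monomial; the key is that positivity in $\R$ still makes sense for elements of $\ringtwo$, so the standard real-matrix argument (look at $(i,i)$ entry of $M M^{-1} = I$, a sum of products of nonnegative reals equal to $1$, etc.) carries over verbatim, but it is worth stating carefully.
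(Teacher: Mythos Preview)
Your proof is correct and takes a genuinely different route from the paper's.

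The paper works entirely inside $V$: it picks an $x$ with $s(x)\neq x$, writes $s(x)=ax+\sum_i b_i y_i+\hbar$ with nonnegative coefficients, applies $\al_s^\vee$ and the identity $\al_s^\vee(\al_s)=2$ to obtain \[1+a=-\sum_i b_i\mu_i/\la,\] and then examines the coefficients of $\la\, s(y_i)$ directly to force $b_j=0$ for $j\ne i$ and $a=0$. Everything is done by chasing sign constraints on explicit coefficients.

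Your argument instead separates the two constraints cleanly. Passing to $\overline V=V/\Span_\ringone H$ reduces to a genuine orthant; invoking the elementary fact that a nonnegative real matrix with nonnegative inverse is monomial yields $s(\overline{x_i})=b_i\,\overline{x_{\sigma(i)}}$ for a permutation $\sigma$ with $\sigma^2=\mathrm{id}$ and $b_i b_{\sigma(i)}=1$. Only then do you bring in the reflection formula, using that every $s(\overline{x_i})-\overline{x_i}$ lies on the single line $\ringone\cdot\overline{\al_s}$ to force $\sigma$ to have at most one $2$-cycle. The lift via $s^2=1$ and $s(\hbar_j)=\hbar_j$ is straightforward.

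What each approach buys: the paper's computation is self-contained and avoids any quotient construction or appeal to the monomial-matrix lemma. Your approach isolates the two ingredients (orthant-preservation $\Rightarrow$ monomial; reflection $\Rightarrow$ rank-one deviation from the identity $\Rightarrow$ at most one transposition), making the structure more transparent and the argument shorter once the monomial-matrix lemma is granted. The ``obstacle'' you flag is not really one: since $\ringtwo_+\subset\R_{\ge 0}$, the standard real-matrix argument for monomiality goes through verbatim, exactly as you note.
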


If $s$ acts nontrivially as above, we say that $s$ acts as a \emph{rescaled shifted transposition} which transposes $x$ and $y$.

\begin{proof}
If $W$ preserves the almost-orthant spanned by $(X,H)$ then it also preserves the almost-orthant spanned by $(-X,H)$, where $-X = \{-z \mid z \in X\}$. Note that $s(x) = by + \hbar$ if and only if $s(-x) = b(-y) - \hbar$. So $s$ has the desired form on the span of $(X,H)$ if and only if it has the desired form on $(-X,H)$. 

If $s$ does not act trivially, then choose some $x \in X$ such that $s(x) \ne x$. If $\al_s^\vee(x) < 0$, then replace $x$ with $-x$ and $X$ with $-X$. Thus we can assume without loss of generality that $\la := \al_s^\vee(x)$ is non-negative. Since $s(x) \ne x$, we have $\la > 0$.

By definition of the action we have $s(x) = x - \la \al_s$. Since this lies within the almost-orthant, we have
\begin{equation} x - \la \al_s = ax + \sum_i b_i y_i + \hbar, \end{equation}
for some $\hbar \in \Span_{\ringtwo} H$, $a \in \ringtwo_+$, and $b_i \in \ringtwo_+$, as $y_i$ ranges over $X \setminus \{x\}$. Thus
\begin{equation} \label{eq:al} \la \al_s = (1-a) x - \sum b_i y_i - \hbar. \end{equation}
Set $\mu_i := \al_s^\vee(y_i)$. Since $\al_s^\vee(\al_s) = 2$ and $\al_s^\vee(\hbar) = 0$, we have $2 = 1 - a - \sum \frac{b_i \mu_i}{\la}$, or 
\begin{equation} \label{eq:1ab} 1 + a = - \sum \frac{b_i \mu_i}{\la}. \end{equation}
(We can do this calculation in $\R$, where $\la$ is invertible.)
In particular, since $a \ge 0$ and $b_i \ge 0$ and $\la > 0$, there must be at least one $i$ such that $b_i \ne 0$ and $\mu_i < 0$.
Fix such an $i$, and consider $\la s(y_i)$:
\begin{equation} \label{syipos} \la s(y_i) = \la (y_i - \mu_i \al_s) = \mu_i(a-1) x + (\la + \mu_i b_i) y_i + \sum_{j \ne i} \mu_i b_j y_j + \mu_i \hbar . \end{equation}
Ignoring the term $\mu_i \hbar$, all coefficients on the right-hand side must live in $\ringtwo_+$.

First we claim that $b_j = 0$ for $j \ne i$. By \eqref{syipos} we have $\mu_i b_j \ge 0$. But $\mu_i < 0$ and $b_j \ge 0$ so $\mu_i b_j \le 0$.

Rewriting, we set $y = y_i$ and $b = b_i$ and $\mu = \mu_i$. Then
\begin{equation} 1 + a = - \frac{b \mu}{\la}, \qquad \la s(y) = \mu(a-1) x + (\la + \mu b) y + \mu \hbar. \end{equation}

By the first equation, $\la + \mu b = - \la a \le 0$. However, $\la + \mu b \ge 0$ because it appears as a coefficient in $\la s(y)$. Thus $\la + \mu b = 0$ and $a = 0$.

We conclude that $s(x) = by + \hbar$, where $b = \frac{-\la}{\mu} > 0$.

Now $\la \al_s = x - by - \hbar$ by \eqref{eq:al}. If $z \in X$, $z \ne x,y$ is any other basis element, let $\nu = \al_s^\vee(z)$. Then
\begin{equation} \la s(z) = \la z - \nu x + \nu b y + \nu \hbar. \end{equation}
The coefficient of $x$ is $-\nu \ge 0$, and the coefficient of $y$ is $\nu b \ge 0$. But $b > 0$, so the only possibility is $\nu = 0$. Hence $s(z) = z$.

Let $b^{-1}$ denote the coefficient of $x$ in $s(y)$, which must be in $\ringtwo_+$ by hypothesis. Since $s$ is an involution, $x = b s(y) + \hbar$, and thus $b \cdot b^{-1} = 1$. 
We have
\begin{equation} s(x) = by + \hbar \qquad \iff \qquad s(y) = b^{-1}(x - \hbar). \end{equation}
This proves that $b$ is invertible with inverse in $\ringtwo_+$, and that the symmetry in the statement does preserve valid quadruples. Since $s(z) = z$ for all $z \in X \setminus \{x,y\}$, the quadruple $(x,y,b,\hbar)$ is unique up to this symmetry.
\end{proof}

\begin{cor} In the setup of Theorem \ref{thm1}, if $\ringtwo = \Z$, then any $s \in S$ either acts trivially, or acts to permute $X$ by a single transposition modulo the span of $H$. \end{cor}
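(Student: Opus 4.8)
The plan is to derive this immediately from Theorem~\ref{thm1}. First I would specialize the conclusion of that theorem to the case $\ringtwo = \Z$, so that $\ringtwo_+ = \Z_{\ge 0}$. If $s$ does not act trivially, Theorem~\ref{thm1} produces $x \ne y \in X$, an element $b \in \Z_{\ge 0}$, and $\hbar \in \Span_{\Z} H$ with $s(x) = by + \hbar$ and $s(z) = z$ for all $z \in X \setminus \{x,y\}$; moreover $b$ is invertible with $b^{-1} \in \Z_{\ge 0}$. The one observation needed is that the only element of $\Z_{\ge 0}$ whose inverse also lies in $\Z_{\ge 0}$ is $1$: the relation $b \cdot b^{-1} = 1$ with both factors nonnegative integers forces $b = b^{-1} = 1$. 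Hence $s(x) = y + \hbar$, and by the symmetry clause of Theorem~\ref{thm1} (now with $b^{-1} = 1$) also $s(y) = x - \hbar$.

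Next I would pass to the quotient lattice $\overline{\La} := \La / \Span_{\Z} H$. Since $(X,H)$ is a $\Z$-basis of $\La$ and every element of $H$ is $W$-invariant, $\Span_{\Z} H$ is a $W$-stable sublattice, so the $W$-action descends to $\overline{\La}$, and the images $\{\overline{z} \mid z \in X\}$ form a $\Z$-basis of $\overline{\La}$. Under reduction, the identities above become $s(\overline{x}) = \overline{y}$, $s(\overline{y}) = \overline{x}$, and $s(\overline{z}) = \overline{z}$ for all other $z \in X$; that is, $s$ acts on the basis $\{\overline{z}\}$ of $\overline{\La}$ by the single transposition swapping $\overline{x}$ and $\overline{y}$. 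Together with the trivial alternative passed through verbatim from Theorem~\ref{thm1}, this is exactly the assertion.

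I do not expect any genuine obstacle here: the entire content is in Theorem~\ref{thm1}, and the corollary is just the translation of ``$b$ invertible over $\ringtwo$ with inverse in $\ringtwo_+$'' into ``$b = 1$'' in the special case $\ringtwo = \Z$, combined with the elementary fact that the $W$-action descends to the quotient by the invariant part of the lattice, on which it then permutes the distinguished basis.
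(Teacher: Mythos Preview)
Your proposal is correct and follows exactly the paper's approach: the entire content is the observation that the only invertible element of $\Z_{\ge 0}$ is $1$, forcing $b=1$ in Theorem~\ref{thm1}. Your additional passage to the quotient $\La/\Span_{\Z} H$ merely unpacks what ``permutes $X$ by a single transposition modulo the span of $H$'' means, which the paper leaves implicit.
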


\begin{proof} The only invertible element of $\ringtwo^+$ is $1$, so the scalar $b$ in Theorem \ref{thm1} satisfies $b=1$. \end{proof}
	
Here is one of the results promised in the introduction.
	
\begin{cor} Suppose $W$ is a Coxeter group with a realization $V$ over $\ringone \subset \R$ which preserves a lattice $\La$ over $\ringtwo = \Z$, and preserves an orthant (not just an almost-orthant) inside.   
Then $W$ permutes a basis of $\La$, and the simple reflections act either trivially or as transpositions. \end{cor}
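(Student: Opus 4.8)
The plan is to deduce this directly from Theorem \ref{thm1} and its $\ringtwo = \Z$ corollary, by observing that an orthant is precisely the special case of an almost-orthant in which the $W$-invariant part $H$ of the distinguished basis is empty.

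First I would note that if $W$ preserves the orthant which is the $\Z_{\ge 0}$-span of a $\Z$-basis $\{x_1, \ldots, x_n\}$ of $\La$, then $W$ preserves the almost-orthant spanned by $(X,H)$ with $X = \{x_1, \ldots, x_n\}$ and $H = \emptyset$: the condition that each element of $H$ be $W$-invariant is vacuous, and with $H$ empty the set of vectors \eqref{almostorthant} is exactly $\{\sum_i a_i x_i : a_i \in \Z_{\ge 0}\}$, i.e.\ the given orthant. So the hypotheses of Theorem \ref{thm1}, and of the corollary immediately preceding this one, are satisfied with $\ringtwo = \Z$.

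Second, I would apply that corollary: every $s \in S$ either acts trivially on $V$, or permutes $X$ by a single transposition modulo $\Span_{\Z} H$. Since $H = \emptyset$, we have $\Span_{\Z} H = \{0\}$, so the phrase ``modulo $\Span_{\Z} H$'' is vacuous, and each nontrivial simple reflection acts as a genuine transposition of two elements of $X$, fixing the remaining basis elements. (In the notation of Theorem \ref{thm1}, the shift $\hbar$ lies in $\Span_{\Z} H = \{0\}$ and the scalar $b \in \ringtwo_+$ is invertible with inverse in $\ringtwo_+$, hence $b = 1$; so $s(x) = y$ and $s(y) = x$.) Finally I would observe that, since $W$ is generated by $S$ and each generator permutes the finite set $X$ --- which is a $\Z$-basis of $\La$ --- the whole group $W$ acts on $\La$ by permutations of $X$, giving the asserted conclusion.

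I do not expect a real obstacle here: essentially all of the content lives in Theorem \ref{thm1}. The only point that requires a small amount of care is the first step, namely verifying that ``preserving an orthant'' genuinely falls under the ``preserving an almost-orthant'' hypothesis of Theorem \ref{thm1} (one just takes $H = \emptyset$); once this is noted, the statement is immediate. One could instead re-run the proof of Theorem \ref{thm1} in this simpler setting with $\hbar$ set to $0$ throughout, but that would merely duplicate work already done.
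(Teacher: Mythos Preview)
Your proposal is correct and follows essentially the same approach as the paper: observe that an orthant is an almost-orthant with $H=\emptyset$, apply the preceding $\ringtwo=\Z$ corollary, and note that the ``modulo $\Span_{\Z} H$'' clause becomes vacuous. Your write-up is more explicit than the paper's two-sentence version (in particular you spell out that $W$, being generated by $S$, permutes $X$), but the argument is the same.
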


\begin{proof} We are in the setup of Theorem \ref{thm1} but with $H = \emptyset$. By the previous corollary, each simple reflection acts either trivially or by a transposition. \end{proof}

\subsection{Coxeter groups faithfully preserving an orthant}

We next prove that when $W$ acts faithfully and preserves an almost orthant over $\Z$, then $W$ is a product of finite and affine Weyl groups in type $A$. The assumption of
faithfulness is essential. We have already illustrated some non-faithful realizations in Remark \ref{rmk:nonfaithful}, and the proof below effectively points out a number of other non-faithful realizations. We work over a general subring $\ringtwo$ for as long as possible, and specialize to $\ringtwo = \Z$ when it become essential.

\begin{defn} \label{thegraph} Fix a realization $V$ of $(W,S)$ over $\ringone \subset \R$, and suppose that $W$ preserves the almost-orthant for $\ringtwo$ spanned by $(X,H)$. For each $s \in S$ which acts nontrivially on $V$, let $X_s = \{x,y\}$ be the unique pair of elements in $X$ such that $s(x) \ne x$ and $s(y) \ne y$, as guaranteed by Theorem \ref{thm1}. If $s \in S$ acts trivially, set $X_s = \emptyset$. The \emph{transposition graph} $\Gamma$ (corresponding to this setup) has vertex set $X$. For each $s \in S$ which acts nontrivally on $V$ there is an unoriented edge $e_s$ connecting the two elements of $X_s$. \end{defn}
	
In a Coxeter graph, the simple reflections correspond to vertices, while in a transposition graph they correspond to edges. To avoid confusion we use non-Coxeter terminology when discussing components of transposition graphs. We talk about the graph matching a type $A_n$ Dynkin diagram (for $n \ge 1$) as a \emph{line}, and talk about $\tilde{A}_n$ (for $n \ge 2$) as a \emph{circle}. We refer to two vertices connected by a double edge but not connected to any other vertices (i.e. $\tilde{A}_1$) as a \emph{lone double edge}.

We maintain the setup of Definition \ref{thegraph} throughout this section. The goal is to prove that if $W$ acts faithfully then $\Gamma$ has a very restricted form. When some $m_{st} = \infty$, we need to assume $\ringtwo = \Z$ to get the strongest restrictions.

First we argue that double edges correspond to reflections with $m_{st} = \infty$.

\begin{lem} Suppose that $X_s = X_t$. Then either $st$ acts trivially on $V$, or $st$ has infinite order on $V$. If $\ringtwo = \Z$ and $H = \emptyset$ then $st$ acts trivially. \end{lem}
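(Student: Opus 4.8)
The plan is to make everything explicit using Theorem \ref{thm1} and then just compute the order of $st$. Write $X_s = X_t = \{x,y\}$; if $s$ acts trivially then $X_s = \emptyset$, hence $X_t = \emptyset$ and $t$ acts trivially too, so there is nothing to prove, and we may assume both $s$ and $t$ act nontrivially. By Theorem \ref{thm1} we may choose the quadruples so that
\[
s(x) = by + \hbar, \quad s(y) = b^{-1}(x - \hbar), \qquad t(x) = cy + \eta, \quad t(y) = c^{-1}(x - \eta),
\]
with $b, c \in \ringtwo_+$ invertible and $\hbar, \eta \in \Span_{\ringtwo} H$; since $\ringone \subset \R$ this forces $b, c > 0$. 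Both $s$ and $t$ fix every element of $X \setminus \{x,y\}$ and of $H$ (the $\hbar_j$ being $W$-invariant, $s$ and $t$ also fix $\hbar$ and $\eta$). Hence $V = U \oplus \Span_{\ringone}(X \setminus \{x,y\})$ with $U := \Span_{\ringone}(\{x,y\} \cup H)$, the second summand is fixed pointwise by $\langle s,t\rangle$, and $U$ is $\langle s,t\rangle$-stable, so the order of $st$ on $V$ equals its order on $U$.

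Next I would compute $st$ on $U$: a direct calculation gives $st(x) = (c/b)\,x + w_x$ and $st(y) = (b/c)\,y + w_y$ for certain $w_x, w_y \in \Span_{\ringone} H$, while $st$ fixes $H$ pointwise. Thus, in the basis $(x, y, \hbar_1, \dots, \hbar_m)$ of $U$, the matrix of $st$ is block lower-triangular with diagonal blocks $\operatorname{diag}(c/b,\, b/c)$ and $I_m$, so $(st)^k$ has $\operatorname{diag}\big((c/b)^k, (b/c)^k\big)$ as its upper-left block. If $st$ has finite order $k \ge 1$ on $V$, then $(c/b)^k = 1$; as $c/b$ is a positive real, this forces $c/b = 1$, i.e. $b = c$.

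Once $b = c$, the same calculation collapses to $st(x) = x + v$ and $st(y) = y - b^{-1}v$ with $v := \eta - \hbar \in \Span_{\ringtwo} H$, and $st$ fixes $v$; hence $(st)^k(x) = x + kv$. Finite order then forces $kv = 0$, and since $\ringone$ has characteristic $0$ and $V$ is free, $v = 0$, i.e. $\eta = \hbar$. But then $s(x) = by + \hbar = cy + \eta = t(x)$ and likewise $s(y) = t(y)$, so $s = t$ on $V$ and $st = s^2$ acts trivially. This establishes the dichotomy. For the last sentence, take $\ringtwo = \Z$ and $H = \emptyset$: then $\hbar = \eta = 0$, and the only invertible element of $\Z_{\ge 0}$ is $1$, so $b = c = 1$ by Theorem \ref{thm1}; hence $s(x) = y = t(x)$ and $s(y) = x = t(y)$, so $s = t$ on $V$ and $st$ acts trivially. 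I do not expect a serious obstacle here: the only points demanding care are the reduction to the invariant subspace $U$ (checking that $s$ and $t$ genuinely fix $X \setminus \{x,y\}$ and $H$, which is exactly the content of Theorem \ref{thm1}) and the elementary positivity input that $(c/b)^k = 1$ with $c/b > 0$ forces $c/b = 1$, which is what rules out a genuine finite rotation and leaves only translation behaviour, in turn killed by $s = t$.
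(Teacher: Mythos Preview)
Your proposal is correct and follows essentially the same route as the paper: compute $st(x)$ explicitly via Theorem~\ref{thm1}, use positivity of $c/b \in \R_{>0}$ to force $b=c$ from $(c/b)^k=1$, then observe that $st$ becomes a translation by $\eta-\hbar$ on $x$, which must vanish if $st$ has finite order, whence $s=t$ on $V$. The only differences are cosmetic: you package the computation in block-triangular matrix language on the subspace $U$ and also track $st(y)$, whereas the paper works directly with $st(x)$ alone.
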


\begin{proof} If both $s$ and $t$ act trivially the statement is obvious. Otherwise both act nontrivially. Set $X_s = \{x,y\} = X_t$. Then there exist $b_s, b_t \in
\ringtwo_+^\times$ and $\hbar_s, \hbar_t \in \Span_{\ringtwo} H$ such that $s(x) = b_s y + \hbar_s$ and $t(x) = b_t y + \hbar_t$. Then
\begin{equation}\label{eq:staction} st(x) = b_t b_s^{-1}(x) - b_t b_s^{-1} \hbar_s + \hbar_t.\end{equation} Thus the coefficient $x$ in $(st)^m (x)$ is $(b_t b_s^{-1})^m$. Since the only root of unity in $\ringtwo_+ \subset \R_+$ is $1$, if $st$
has finite order then $b_s = b_t$. In this case $st(x) = x + (\hbar_t - \hbar_s)$ is a translation. Nonzero translations (in a free module over a characteristic zero domain) have
infinite order. So if $st$ has finite order then $\hbar_t - \hbar_s = 0$, in which case $st$ acts trivially.

In the case when $\ringtwo = \Z$ and $H = \emptyset$, then $b_s = b_t = 1$ (as the only unit in $\ringtwo_+$) and $\hbar_s = \hbar_t = 0$. \end{proof}

Now we examine two edges meeting at a vertex.

\begin{lem} Suppose that $X_s \cap X_t$ has size $1$. Then the action of $st$ has order $3$, and $sts$ acts as a rescaled shifted transposition whose two non-fixed basis elements are the symmetric difference of $X_s$ and $X_t$. \end{lem}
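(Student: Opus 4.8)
The plan is to make the actions of $s$ and $t$ completely explicit on a small set of basis vectors, compute $sts$ and $tst$ directly, and read off both conclusions.

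Since $X_s \cap X_t$ has size $1$, neither $s$ nor $t$ acts trivially. Write $X_s = \{x, y\}$ and $X_t = \{y, z\}$ with $y$ the common element; then $x, y, z$ are pairwise distinct (if $x = z$ the intersection would have size $2$), and the symmetric difference of $X_s$ and $X_t$ is $\{x, z\}$. By Theorem \ref{thm1} and its built-in symmetry $(x,y,b,\hbar) \leftrightarrow (y,x,b^{-1},-b^{-1}\hbar)$ (applied to $t$ so that its formula is written in terms of the shared vector $y$), I would fix data $b_s, b_t \in \ringtwo_+^\times$ and $\hbar_s, \hbar_t \in \Span_{\ringtwo} H$ with
\begin{equation}
\begin{aligned}
s(x) &= b_s y + \hbar_s, & s(y) &= b_s^{-1}(x - \hbar_s),\\
t(y) &= b_t z + \hbar_t, & t(z) &= b_t^{-1}(y - \hbar_t),
\end{aligned}
\end{equation}
together with the crucial bookkeeping point, also from Theorem \ref{thm1}: $s$ fixes every element of $X \setminus \{x,y\}$ (in particular $z$) and every element of $H$, while $t$ fixes every element of $X \setminus \{y,z\}$ (in particular $x$) and every element of $H$.

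Next I would compute $sts$ and $tst$ on $x$, $y$, $z$ using these formulas (every other element of $X \cup H$ being fixed by both $s$ and $t$, hence by $sts$ and $tst$). The calculation yields
\begin{equation}
\begin{aligned}
sts(x) = tst(x) &= b_s b_t\, z + b_s \hbar_t + \hbar_s, \qquad sts(y) = tst(y) = y,\\
sts(z) = tst(z) &= (b_s b_t)^{-1} x - (b_s b_t)^{-1}\hbar_s - b_t^{-1}\hbar_t.
\end{aligned}
\end{equation}
Two conclusions fall out. First, $sts$ fixes $y$ and every basis vector outside $\{x, z\}$, and sends $x \mapsto b' z + \hbar'$ with $b' = b_s b_t \in \ringtwo_+^\times$ and $\hbar' = b_s\hbar_t + \hbar_s \in \Span_{\ringtwo} H$; so $sts$ is a rescaled shifted transposition whose two non-fixed basis vectors are $\{x, z\}$, the symmetric difference of $X_s$ and $X_t$, as claimed. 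Second, $sts = tst$ as operators on $V$. Since $s$ and $t$ act as involutions on $V$, the identity $sts = tst$ forces $(st)^3 = \mathrm{id}$ on $V$ via $stst = (sts)t = (tst)t = ts$ and hence $(st)^3 = (stst)(st) = (ts)(st) = \mathrm{id}$. Finally, $st$ cannot act trivially on $V$, since $s$ moves $x$ while $t$ fixes it; therefore $st$ has order exactly $3$ on $V$.

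I expect the only real work to be the bookkeeping in the third paragraph: keeping the $\Span_{\ringtwo} H$ shift terms straight through the triple compositions, and making sure the Theorem \ref{thm1} presentation of $t$ is written with respect to the shared vector $y$. There is no conceptual obstacle; the ``fixes everything else'' clauses of Theorem \ref{thm1} do all the heavy lifting, collapsing the computation to the three vectors $x$, $y$, $z$.
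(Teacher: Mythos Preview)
Your proposal is correct and follows essentially the same approach as the paper: both arguments write down the explicit rescaled-shifted-transposition formulas for $s$ and $t$, compute $sts$ and $tst$ on the three relevant basis vectors, observe that they agree, and conclude that $(st)^3$ acts trivially while $st$ does not. The only differences are cosmetic: the paper labels the common vertex $x$ rather than $y$ (so its shortcut is $sts(x)=x$, using that $t$ fixes $s(x)$), and it leaves the ``order exactly $3$'' and ``fixes everything else'' points implicit, whereas you spell them out.
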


\begin{proof} Let $X_s = \{x,y\}$ and $X_t = \{x,z\}$. Clearly $st$ does not act trivially. Let $b_s, b_t, \hbar_s, \hbar_t$ be such that $s(x) = b_s y + \hbar_s$ and $t(x) = b_t z + \hbar_t$. Note that $t$ fixes $s(x)$. Thus $sts(x) = x$. We compute that
\begin{equation} sts(z) = st(z) = s(b_t^{-1} (x - \hbar_t)) = b_t^{-1} (b_s y + \hbar_s - \hbar_t), \end{equation}
\begin{equation} tst(z) = ts(b_t^{-1} (x - \hbar_t)) = b_t^{-1} t(b_s y + \hbar_s - \hbar_t) = b_t^{-1}(b_s y + \hbar_s - \hbar_t). \end{equation}
In particular, $sts = tst$ on $z$ (and similarly on $y$), and acts as a rescaled shifted tranposition which transposes $y$ and $z$. \end{proof}

The next three lemmas argue that valence $\ge 3$ vertices are forbidden for faithful (integral) realizations. 

\begin{lem} Let $s, t, u$ be three simple reflections and $x \in X$ for which $X_s \cap X_t = X_s \cap X_u = X_t \cap X_u = \{x\}$. Then the action of $W$ is not faithful. We need not assume $\ringtwo = \Z$. \end{lem}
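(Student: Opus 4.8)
The plan is to show that the three edges $e_s, e_t, e_u$ meeting at the common vertex $x$ force a relation among $s,t,u$ that is not satisfied in the abstract Coxeter group, hence the action cannot be faithful. Write $X_s = \{x,y\}$, $X_t = \{x,z\}$, $X_u = \{x,w\}$, where $y,z,w$ are distinct elements of $X$ (distinct because each pairwise intersection is exactly $\{x\}$). By Theorem \ref{thm1} there are units $b_s, b_t, b_u \in \ringtwo_+^\times$ and elements $\hbar_s, \hbar_t, \hbar_u \in \Span_{\ringtwo} H$ with $s(x) = b_s y + \hbar_s$, $t(x) = b_t z + \hbar_t$, $u(x) = b_u w + \hbar_u$. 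The key point is that for any pair among $\{s,t,u\}$, say $s$ and $t$, the previous lemma applies (since $X_s \cap X_t$ has size $1$), so $st$ has order $3$ on $V$ and $sts = tst$ acts as a rescaled shifted transposition of $y$ and $z$; in particular $sts(x) = x$. Likewise $sus$ transposes $y$ and $w$ and fixes $x$, and $tut$ transposes $z$ and $w$ and fixes $x$.

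First I would use these order-$3$ facts to reduce everything to computations on the rank-$4$ (mod $H$) piece spanned by $x,y,z,w$. Since $sts = tst$ fixes $x$ and swaps $y,z$ (up to rescaling and shift), and similarly for the other pairs, each of $s,t,u$ preserves this piece; and each of $s,t,u$ fixes every element of $X$ outside $\{x,y,z,w\}$, because $s$ fixes $X \setminus \{x,y\}$ and so on. So the question of faithfulness reduces to whether the subgroup $\langle s,t,u\rangle$ acts faithfully on this piece. The strategy is to locate a word in $s,t,u$ that acts trivially on $x,y,z,w$ (and hence on all of $X$, and on $H$ since $H$ is fixed) but is not the identity in the Coxeter group. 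A natural candidate: since $sts = tst$, the element $(st)^3 = e$ holds on $V$; similarly $(su)^3 = (tu)^3 = e$ on $V$. But in the abstract group generated by $s,t,u$ we do not know the orders $m_{st}, m_{su}, m_{tu}$ — they are whatever the Coxeter datum says. If, say, $m_{st} \neq 3$, then $(st)^3$ is a nontrivial element of $W$ acting trivially on $V$, and we are done. So the only remaining case is $m_{st} = m_{su} = m_{tu} = 3$; equivalently $\langle s,t,u\rangle$ abstractly surjects onto (a quotient of) the Coxeter group of type $A_3$ or $\tilde A_2$ or similar, and we must show \emph{that} action on $x,y,z,w$ is still non-faithful.

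For the remaining case I would compute explicitly. With $m_{st}=m_{tu}=m_{su}=3$ the abstract group $\langle s,t,u \rangle$ has a presentation with braid relations of length $3$ between every pair; I would track the action on the quadruple $(x,y,z,w)$ modulo $\Span_{\ringtwo} H$ via the scalars $b_s, b_t, b_u$ and the shifts. From Lemma-type identities: on the $x,y$-part $s$ and $t$ interact exactly like the permutation realization of $S_3$ twisted by scalars; consistency of the braid relation $sts=tst$ acting on $z$ (computed in the previous lemma) already pins down how $b_s, b_t$ and $\hbar_s,\hbar_t$ are related. Doing the same for the pairs $(s,u)$ and $(t,u)$, I expect to find that the scalars must all coincide, $b_s = b_t = b_u =: b$, and the shifts are forced into a compatible configuration; then the whole action factors through the permutation action of $S_3$ (twisted by $b$ and a shift) on the three \emph{symbols} $\{$``$y$'', ``$z$'', ``$w$''$\}$ together with the fixed vector $x$ — a rank-$3$-worth of motion. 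But $\langle s,t,u\rangle$ with three order-$3$ braid relations is strictly larger than $S_3$ (it surjects onto the affine $\tilde A_2$ Coxeter group, which is infinite), so its image in $GL(V)$ — being a quotient of $S_3$ or at most a $2$-dimensional affine group — is a proper quotient, and the action is not faithful.

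\textbf{Main obstacle.} The hard part will be the bookkeeping in the last case: showing that the three twisted order-$3$ relations, imposed simultaneously on a common vertex, genuinely collapse the action to something finite (or at least to a proper quotient of the abstract group). The danger is that the shifts $\hbar_s,\hbar_t,\hbar_u$ conspire to produce an infinite translation subgroup, which could in principle make the image infinite and conceivably faithful. Ruling this out is where the hypothesis $\La$ has rank $n+m$ with only $n$ "moving" directions $X$ and only $\{x,y,z,w\}$ involved is crucial: all translations must live in $\Span_{\ringtwo} H$, which is pointwise fixed, so a nontrivial translation in the image of $\langle s,t,u\rangle$ would itself be a nontrivial element acting trivially, immediately giving non-faithfulness. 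So in fact every subcase terminates the same way — either a braid relation of the "wrong" length, or a forced translation, or a collapse to $S_3$ — and assembling these cases cleanly is the real content.
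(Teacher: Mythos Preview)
Your reduction to the case $m_{st}=m_{su}=m_{tu}=3$ is correct and matches the paper, but your plan for that remaining case has a real gap. You hope to show the image of $\langle s,t,u\rangle \cong \tilde A_2$ in $GL(V)$ collapses to something finite, either by forcing $b_s=b_t=b_u$ and landing in a twisted $S_3$, or by arguing that any residual ``translation'' already witnesses non-faithfulness. Neither works as stated. There is no reason the scalars must coincide (take $\ringtwo=\Q$, $H=\emptyset$, and $s(x)=2y$, $t(x)=3z$, $u(x)=5w$; one checks directly this extends to a realization with all $m=3$). The permutation shadow on $\{x,y,z,w\}$ is generated by $(xy),(xz),(xw)$ and is $S_4$, not $S_3$. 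Most seriously, a ``translation'' in your sense --- an element $g$ with $g(x_i)=x_i+h_i$ for $h_i\in\Span_{\ringtwo} H$ and $g$ fixing $H$ --- is a nontrivial unipotent linear operator on $V$, \emph{not} the identity; it does not give an element of $W$ acting trivially. So none of your branches actually produces a concrete word in $s,t,u$ that is trivial on $V$ but nontrivial in $W$.

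The paper's argument is a one-line observation you were one step away from. You already recorded (via the previous lemma) that $sts$ is a rescaled shifted transposition on $\{y,z\}$ fixing $x$. Now simply note that $u$ is a rescaled shifted transposition on the disjoint pair $\{x,w\}$; both involutions fix $H$ pointwise and fix every element of $X$ outside their respective pairs, so they commute as operators on $V$, and $(sts\cdot u)^2$ acts as the identity. But in $\tilde A_2$ the reflections $sts$ and $u$ do not commute (the root of $sts$ is $\alpha_s+\alpha_t$, and $\alpha_u^\vee(\alpha_s+\alpha_t)=-2\ne 0$), so $(stsu)^2\neq e$ in $W$. This single relation finishes the proof with no computation of the $b$'s or $\hbar$'s and for arbitrary $\ringtwo$; incidentally, the quotient of $\tilde A_2$ by this relation is exactly $S_4$, which is what your collapse heuristic was circling.
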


\begin{proof} Suppose that $X_s = \{x,y\}$ and $X_t = \{x,z\}$ and $X_u = \{x,w\}$.  If the action of $W$ were faithful, then $m_{st} = m_{su} = m_{tu} = 3$ by the previous lemma. Also, $sts$ is a rescaled shifted transposition on the set $\{y,z\}$, and thus commutes with $u$, which is a rescaled shifted transposition on the disjoint set $\{x,w\}$. So $(stsu)^2$ acts trivially. However, the element $(stsu)^2$ is not the identity in $W$. \end{proof}
	
\begin{remark} The quotient of $\tilde{A}_2$ by $(stsu)^2$ is isomorphic to $S_4$, just as in Remark \ref{rmk:nonfaithful}. \end{remark}

To rule out other ways of achieving valence $\ge 3$, we need to assume integrality. We begin by ruling out a triple edge.

\begin{lem} Suppose that $X_s = X_t = X_u \ne \emptyset$ for distinct simple reflections $s, t, u \in S$. If $\ringtwo = \Z$ then $W$ does not act faithfully. \end{lem}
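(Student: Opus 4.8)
The plan is to assume, for contradiction, that $W$ acts faithfully on $V$. Since $s,t,u$ are distinct and $X_s = X_t = X_u \ne \emptyset$, applying the earlier lemma on the case $X_s = X_t$ to each of the pairs $\{s,t\}$, $\{s,u\}$, $\{t,u\}$ shows that $st$, $su$, and $tu$ each either act trivially on $V$ or act with infinite order; faithfulness rules out the trivial case (as $st,su,tu \ne 1$ in $W$), so each has infinite order in $W$, forcing $m_{st} = m_{su} = m_{tu} = \infty$. Since $\langle s,t,u\rangle$ is a standard parabolic subgroup of $W$, it is the Coxeter group on these three generators with the inherited Coxeter matrix, hence the free product $\Z/2\Z \ast \Z/2\Z \ast \Z/2\Z$. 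In this group $st$ and $su$ do not commute: for instance the commutator $st \cdot su \cdot ts \cdot us$ is a reduced word of length $8$ and so is not the identity.

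The point is then to show that, by contrast, the images of $st$ and $su$ in $GL(V)$ \emph{do} commute. Write $X_s = \{x,y\}$. Because $\ringtwo = \Z$, the corollary to Theorem~\ref{thm1} tells us the rescaling scalars are all equal to $1$, so there are $\hbar_s, \hbar_t, \hbar_u \in \Span_{\ringtwo} H$ with $s(x) = y + \hbar_s$, $t(x) = y + \hbar_t$, $u(x) = y + \hbar_u$. Using that $s$ is an involution fixing $\hbar_s$ gives $s(y) = x - \hbar_s$, and likewise for $t$ and $u$; moreover $s,t,u$ all fix every element of $X \setminus \{x,y\}$ and all of $H$. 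A direct computation then shows that $st$ acts on $V$ as the linear map sending $x \mapsto x + (\hbar_t - \hbar_s)$ and $y \mapsto y - (\hbar_t - \hbar_s)$ and fixing the rest of the basis, and $su$ acts analogously with $\hbar_u - \hbar_s$ in place of $\hbar_t - \hbar_s$. Any two such ``translation'' operators commute, since their composition simply adds the two translation vectors (which lie in the $W$-invariant span $\Span_{\ringtwo} H$), so the images of $st$ and $su$ in $GL(V)$ commute.

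Combining the two paragraphs yields the contradiction: faithfulness would require the non-commuting elements $st, su \in W$ to have non-commuting images in $GL(V)$, which they do not. Hence $W$ does not act faithfully. The only computation in the argument is the explicit description of the $st$- and $su$-actions, which is routine; the main point to be careful about is that the hypothesis $\ringtwo = \Z$ is genuinely used here, precisely to force the scalars $b_s, b_t, b_u$ to equal $1$ --- without this, $st$ and $su$ would in general fail to commute on $V$ and the argument would break.
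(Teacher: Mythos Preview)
Your proof is correct and follows essentially the same approach as the paper: both show that $st$ and $su$ commute on $V$ (using that $\ringtwo=\Z$ forces all rescaling scalars to equal $1$, so $st$ and $su$ act as commuting ``translations'' on $\{x,y\}$ by invariant vectors) while $st$ and $su$ do not commute in $W$ once $m_{st}=m_{su}=m_{tu}=\infty$. Your write-up is slightly more detailed than the paper's, explicitly invoking the free-product description of the parabolic subgroup $\langle s,t,u\rangle$ to justify $stsu\ne sust$, whereas the paper simply asserts this.
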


\begin{proof} Let $X_s = X_t = X_u = \{x,y\}$. If the action of $W$ is faithful, then $m_{st} = m_{su} = m_{tu} = \infty$ by the previous lemma. As noted above, $b_s = b_t = b_u =
1$. Using \eqref{eq:staction} twice we have \begin{equation} sust(x) = x - 2 \hbar_s + \hbar_u + \hbar_t = stsu(x). \end{equation} It is easy to verify that $su$ and $st$ commute, whereas $sust \ne stsu$ in $W$, so $W$ does not act faithfully. \end{proof}
	
\begin{lem} Let $s, t, u$ be three distinct simple reflections with $X_s = X_t$ and where $X_s \cap X_u$ has size $1$. If $\ringtwo = \Z$ then $W$ does not act faithfully.
\end{lem}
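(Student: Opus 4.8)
The plan is to assume $W$ acts faithfully and derive a contradiction, following the template of the preceding lemmas: exhibit two elements of $W$ which commute as automorphisms of $V$ but not as elements of $W$.

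First I would record the local picture. After relabelling we may take $X_s = X_t = \{x,y\}$ and $X_u = \{x,w\}$ with $w \notin \{x,y\}$. Since $s \ne t$, a faithful action cannot have $st$ act trivially, so the earlier lemma with $X_s = X_t$ forces $m_{st} = \infty$; applying the earlier lemma on two edges meeting at a vertex to the pairs $(s,u)$ and $(t,u)$ forces $m_{su} = m_{tu} = 3$. As $\ringtwo = \Z$, Theorem \ref{thm1} lets us write $s(x) = y + \hbar_s$, $t(x) = y + \hbar_t$, $u(x) = w + \hbar_u$ with $\hbar_s,\hbar_t,\hbar_u \in \Span_{\ringtwo} H$, with $s,t$ fixing $X \setminus \{x,y\}$, with $u$ fixing $X \setminus \{x,w\}$, and with all three fixing $H$ pointwise. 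Set $A := st$ and $B := uAu$.

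For the ``commuting on $V$'' half I would argue abstractly. Since $s$ and $t$ both act on $V/\Span_{\ringtwo} H$ as the transposition of the images of $x$ and $y$, the element $A = st$ acts trivially on $V/\Span_{\ringtwo} H$; conjugating by $u$, which preserves $\Span_{\ringtwo} H$, the same holds for $B$. Thus $A$ and $B$ lie in the kernel of the $W$-action on $V/\Span_{\ringtwo} H$. Any $g$ in this kernel acts on $V$ as $1 + N_g$ with $N_g(V) \subseteq \Span_{\ringtwo} H$ (it is trivial modulo $\Span_{\ringtwo} H$) and $N_g|_{\Span_{\ringtwo} H} = 0$ (it fixes each $\hbar_j$). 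Hence $N_g N_h = 0 = N_h N_g$ for any two such $g,h$, so their images in $GL(V)$ commute; in particular $A$ and $B$ commute on $V$.

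The main obstacle is the other half: $A$ and $B$ do \emph{not} commute in $W$. The key reduction is that $AB = BA$ is equivalent to $AuAu = uAuA$, i.e.\ to the equality of words $stustu = ustust$ in $W$. Since $\langle s,t,u\rangle$ is a standard parabolic subgroup of $W$ --- hence a Coxeter group with $m_{st} = \infty$ and $m_{su} = m_{tu} = 3$ --- the Matsumoto--Tits theorem applies: two reduced expressions for the same element differ by braid moves. But each of the two length-$6$ words above has no adjacent repetition and admits no braid move whatsoever: the pair $\{s,t\}$ has no finite braid relation, and neither word contains a contiguous $sus$, $usu$, $tut$, or $utu$. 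Hence each word is the unique reduced expression for the element it represents, so the two distinct words represent distinct elements, and $AB \ne BA$ in $W$. This contradicts faithfulness, which would force $AB = BA$. The only point requiring care is the ``no braid move'' check, a short finite verification that hinges on $m_{st} = \infty$.
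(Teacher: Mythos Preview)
Your argument is correct and follows the same overall strategy as the paper: show that $A=st$ and $B=ustu$ commute on $V$ but not in $W$. The execution differs in both halves. For commutativity on $V$, the paper simply writes out $st$ and $ustu$ on $x,y,z$ and observes the formulas commute; your abstract argument (both $A$ and $B$ lie in the kernel of the action on $V/\Span_{\ringtwo}H$, hence act as $1+N$ with $N^2=0$, so any two such operators commute) is cleaner and reusable. For non-commutativity in $W$, the paper just asserts that ``$(ustust)^2$ is not the identity in $W$'' --- which, taken literally, is not the relation one needs (the element $(ustust)^2$ does not act trivially on $V$ either, so this gives no contradiction); what is actually required is exactly your statement $stustu\ne ustust$. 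Your Matsumoto--Tits verification (neither word admits a braid move or a nil move, hence each is its own unique reduced expression) supplies the missing justification and sharpens the paper's claim.
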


\begin{proof} Let $X_s = X_t = \{x,y\}$ and $X_u = \{x,z\}$. If the action of $W$ is faithful then $m_{st} = \infty$ and $m_{su} = 3$ and $m_{tu} = 3$ by previous lemmas. As above, $b_s = b_t = b_u = 1$, so $s(x) = y + \hbar_s$ and $t(x) = y + \hbar_t$ and $u(x) = z + \hbar_u$. We have
\begin{equation} st(x) = x + \hbar_t - \hbar_s, \qquad st(y) = y + \hbar_s - \hbar_t, \qquad st(z) = z, \end{equation}
\begin{equation} ustu(x) = x, \qquad ustu(y) = y + \hbar_s - \hbar_t, \qquad ustu(z) = z + \hbar_t - \hbar_s. \end{equation}
These two operations commute, but $(ustust)^2$ is not the identity in $W$. \end{proof}

We can now conclude that if $W$ acts faithfully and $m_{st} < \infty$ for all $s, t \in S$ (but allowing $\ringtwo \ne \Z$), then the tranposition graph $\Gamma$ has no double edges or
vertices with valence $\ge 3$, so it is a disjoint union of lines and circles. Also, if $W$ acts faithfully and $\ringtwo = \Z$ (but
allowing $m_{st} = \infty$), then the tranposition graph $\Gamma$ has no vertices with valence $\ge 3$, so it is a disjoint union of lines and circles and lone double edges.

\begin{lem} Let $s, t \in S$. Suppose that $X_s \cap X_t = \emptyset$. Then the actions of $s$ and $t$ on $V$ commute. \end{lem}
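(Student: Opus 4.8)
The plan is to check directly that $st$ and $ts$ agree on the basis $X \cup H$ of $V$. First I would dispose of the degenerate cases: if either $s$ or $t$ acts trivially on $V$ it commutes with everything, so we may assume both act nontrivially. By Theorem \ref{thm1} we may then write $X_s = \{x,y\}$ with $s(x) = by + \hbar$ and $s(y) = b^{-1}(x - \hbar)$ for some $b \in \ringtwo_+^\times$ and $\hbar \in \Span_{\ringtwo} H$, and $s(z) = z$ for every $z \in X \setminus \{x,y\}$; likewise $X_t = \{x',y'\}$ with analogous data $b', \hbar'$. Because $X_s \cap X_t = \emptyset$, the elements $x,y,x',y'$ are pairwise distinct.

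The key observation is that $s$ restricts to the identity on $\Span_{\ringtwo}\bigl(H \cup (X \setminus X_s)\bigr)$: it fixes $H$ pointwise since each $\hbar_j$ is $W$-invariant, and it fixes each element of $X \setminus X_s$ by Theorem \ref{thm1}. In particular $s$ fixes $x'$, $y'$, and $\hbar'$ (the last because $\hbar' \in \Span_{\ringtwo} H$), and symmetrically $t$ fixes $x$, $y$, and $\hbar$. Now compare the two composites on each part of the basis. On $H$ both $st$ and $ts$ act as the identity. On any $z \in X \setminus (X_s \cup X_t)$ both composites fix $z$. On $x$ we have $ts(x) = t(by+\hbar) = by + \hbar = s(x) = st(x)$, using that $t$ fixes $y$ and $\hbar$; the computation on $y$ is the same, and the computations on $x'$ and $y'$ follow by the symmetric argument with the roles of $s$ and $t$ exchanged. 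Hence $st = ts$ on all of $V$.

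There is no genuine obstacle in this argument; the only point requiring care is that the ``shift'' vector $\hbar$ produced by $s$ lies in $\Span_{\ringtwo} H$ and is therefore fixed by $t$ (and symmetrically for $\hbar'$ and $s$), which is exactly what Theorem \ref{thm1} provides. One could alternatively phrase the proof without choosing bases by noting that $V$ decomposes, as a module, into the span of $X_s$, the span of $X_t$, the span of the remaining elements of $X$, and $\Span_{\ringtwo} H$, with $s$ acting nontrivially only on the first block and $t$ only on the second; but the basiswise check is shortest.
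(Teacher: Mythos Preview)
Your argument is correct and is exactly the direct verification the paper has in mind; the paper's own proof simply says the nontrivial case is ``easy'' and leaves the basiswise check implicit, so your write-up is a faithful (and more detailed) version of the same approach. One small quibble: in your final parenthetical, the four-block decomposition is not literally $s$-stable (since $s(x) = by + \hbar$ lands partly in $\Span_{\ringtwo} H$), so the alternative phrasing would need the first block to be $\Span(X_s \cup H)$; but since you rely on the basiswise computation rather than this remark, it does not affect the proof.
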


\begin{proof} If either $s$ or $t$ acts trivially the statement is obvious. When both are nontrivial, the statement is still easy. \end{proof}

As a consequence, if $W$ acts faithfully then it is the direct product of the parabolic subgroups associated to (the edges in) each connected component of $\Gamma$. Moreover, we have computed $m_{st}$ for all $s, t \in S$ to be either $2$, $3$, or $\infty$ depending on the overlap of $X_s$ and $X_t$.

\begin{defn} If $C$ is a connected component of the transposition graph $\Gamma$, then $W_C$ is the parabolic subgroup of $W$ generated by the edges in $C$. \end{defn}

\begin{cor} \label{cor:itstypeA} Continue the setup of Definition \ref{thegraph}. If $W$ acts faithfully and either $\ringtwo = \Z$ or $m_{st} < \infty$ for all $s, t \in S$, then $W$ is a product of Coxeter systems in finite and affine type $A$. \end{cor}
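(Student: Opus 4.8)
The plan is to feed the lemmas of this subsection into a description of each connected component of $\Gamma$, and then simply to recognize the Coxeter matrix that results. By the discussion immediately preceding the statement, the hypotheses force $\Gamma$ to be a disjoint union of lines, circles, and (when $\ringtwo = \Z$) lone double edges, and $W$ is the direct product of the standard parabolic subgroups $W_C$ as $C$ ranges over the connected components of $\Gamma$. Since a standard parabolic subgroup of a Coxeter system is again a Coxeter system, with Coxeter matrix obtained by restricting $(m_{st})$ to the chosen generators (a standard fact about Coxeter groups), it suffices to check that for each possible shape of $C$ the restricted matrix, indexed by the edges of $C$, is the Coxeter matrix of a finite or affine type $A$ system.

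Carrying this out: if $C$ is a lone double edge, it consists of two simple reflections $s,t$ with $X_s = X_t$, so $m_{st} = \infty$ by the first lemma of this subsection together with faithfulness, and $\{s,t\}$ is the entire component; hence $W_C$ is the infinite dihedral group, i.e.\ a Coxeter system of type $\tilde{A}_1$. If $C$ is a line with vertices $x_0, x_1, \ldots, x_k$ in order and with edge $e_{s_i}$ joining $x_{i-1}$ to $x_i$, then $X_{s_i} \cap X_{s_{i+1}}$ has size one, so $m_{s_i s_{i+1}} = 3$ by the lemma on two edges meeting at a vertex, while $X_{s_i} \cap X_{s_j} = \emptyset$ for $|i-j| \ge 2$, so $m_{s_i s_j} = 2$ by the commuting lemma; this is exactly the Coxeter matrix of type $A_k$, and $W_C$ is of finite type $A$ (with the trivial cases $k=1$, and a component with no edge at all, included). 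If $C$ is a circle with vertices $x_1, \ldots, x_k$ for $k \ge 3$, the same two computations give $m = 3$ between cyclically adjacent edges and $m = 2$ otherwise, which is the Coxeter matrix of type $\tilde{A}_{k-1}$, so $W_C$ is of affine type $A$. Taking the product over all components of $\Gamma$ then proves the corollary.

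The only genuinely combinatorial point — that "no vertex of valence $\ge 3$", "no triple (or higher) edge", and "no double edge incident to a further edge" together force every connected component to be a line, a circle, or a lone double edge — has already been recorded in the paragraph before the statement, so there is no real obstacle here; the remaining work is the routine diagram identification above. I expect the only subtlety in writing it out to be keeping the small and degenerate cases straight: the single edge ($A_1$) versus an edgeless component (the empty Coxeter system), and the lone double edge ($\tilde{A}_1$) versus a genuine $k$-cycle with $k \ge 3$ ($\tilde{A}_{k-1}$), noting that a $2$-cycle in a multigraph is precisely a double edge and a $1$-cycle cannot occur since the two ends of each edge are distinct elements of $X$ by Theorem \ref{thm1}.
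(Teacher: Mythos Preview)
Your proof is correct and follows the same approach as the paper: reduce to each component $W_C$, then identify the Coxeter type from the lemmas computing $m_{st}$ for edges that share $0$, $1$, or $2$ vertices. The paper's own proof is just a terse version of what you wrote, simply asserting that a line with $n$ vertices gives $S_n$, a lone double edge gives $\tilde{A}_1$, and a circle with $n+1$ vertices gives $\tilde{A}_n$, and saying these are ``easy to verify from the Lemmas above.''
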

	
\begin{proof} We need only prove the statement for each $W_C$. When $C$ is a line with $n$ vertices, $W_C \cong S_n$ (including the trivial case when $n=1$). When the component is a
lone double edge one obtains $\tilde{A}_1$, and when it is a circle with $n+1$ vertices one obtains $\tilde{A}_n$. These statements are easy to verify from the Lemmas above.
\end{proof}

Under the hypotheses of this corollary, we refer to the components which are lines as \emph{finite-type components}, and the components which are circles or lone double edges as \emph{affine-type components}.

\subsection{Recovering the permutation action}

Continue the setup of Definition \ref{thegraph}. It remains to argue that the realization $V$ can be reconstructed from the permutation actions of each component subgroup.

Suppose that $y \in X$ and $b \in \ringtwo_+^\times$ and $\hbar \in \Span_{\ringtwo} H$. Let $y' = by + \hbar$, and let $X'$ be
obtained from $X$ by replacing $y$ with $y'$. Then $(X,H)$ and $(X',H)$ span the same almost-orthant.

Let $C \subset \Gamma$ be a connected component of the transposition graph. Suppose that $C$ is a line with $n$ vertices.
Then one can label the vertices $\{x_1, x_2, \ldots, x_n\}$ and can find simple reflections $s_1, \ldots, s_{n-1}$ such that $X_{s_i} = \{x_i, x_{i+1}\}$. We know that $s_1(x_1) = b
x_2 + \hbar$ for some $b \in \ringtwo_+^\times$ and $\hbar \in \Span_{\ringtwo} H$. Replacing $x_2$ with $x_2' = s_1(x_1)$, we obtain a new basis $X'$ for the same almost-orthant,
and it is not hard to convince oneself that this new basis leads to the same transposition graph (replacing the vertex $x_2$ with $x_2'$). So by working with $X'$ instead of $X$, we
can assume that $s_1(x_1) = x_2$. Changing $x_3, x_4, \ldots, x_n$ in the same way, we can assume that $s_i(x_i) = x_{i+1}$ for all $1 \le i \le n-1$. We call this process
\emph{normalizing the basis} for the component $C$.

Normalizing the basis for one component will not affect the formulas for the action on any other component (in particular, we never change elements of the set $H$). For the next theorem we do not need the assumption that $\ringtwo=\Z$.

\begin{thm} If $W$ is a finite Coxeter group acting faithfully on a realization $V$ and preserving an orthant, then $W$ is a finite product of symmetric groups $W_C$ (including the
trivial group $S_1$), one for each component $C$ of the transposition graph. Moreover, $V \cong \bigoplus_C P_C \oplus Z$, where $P_C$ is the permutation representation of $W_C$ (inflated from this subgroup to $W$), and $Z$ is acted on trivially by $W$. \end{thm}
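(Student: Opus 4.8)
The plan is to assemble the pieces already established: by Corollary \ref{cor:itstypeA}, since $W$ is finite, every component $C$ of the transposition graph is a line (circles and lone double edges give affine groups, which are infinite), so each $W_C \cong S_{n_C}$ where $n_C = |C|$, and $W = \prod_C W_C$ as a Coxeter system. Since $W$ acts faithfully, this direct product decomposition of the Coxeter system is realized as a direct product of the group actions. So the only real content left is the decomposition $V \cong \bigoplus_C P_C \oplus Z$ of the realization itself.

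First I would normalize the basis for every component simultaneously, using the ``normalizing the basis'' procedure described just above the theorem; the remark there notes that normalizing one component does not disturb another (and never touches $H$), so we may do all of them at once. After normalization, for each line component $C = \{x_1^C, \dots, x_{n_C}^C\}$ with simple reflections $s_1^C, \dots, s_{n_C-1}^C$, we have $s_i^C(x_i^C) = x_{i+1}^C$ exactly, and by Theorem \ref{thm1} each $s_i^C$ fixes every basis element in $X \setminus \{x_i^C, x_{i+1}^C\}$. Hence each $s_i^C$ acts on $\Span_{\ringone}(C)$ by the standard transposition $(i, i+1)$ and acts trivially on $\Span_{\ringone}(X \setminus C)$ and on $\Span_{\ringone} H$. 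Therefore $\Span_{\ringone}(C)$ is a $W$-stable submodule on which $W_C$ acts as its permutation representation $P_C$ and on which $W_{C'}$ acts trivially for $C' \neq C$; call this $W$-submodule $\widehat{P}_C$, the inflation of $P_C$ to $W$.

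Next I would set $Z := \Span_{\ringone} H$, which is $W$-invariant by the definition of an almost-orthant (here it is an honest orthant, but $H$ may still be nonempty), and note that $W$ acts trivially on $Z$. Since $X = \bigsqcup_C C$ and $X \cup H$ is an $\ringone$-basis of $V$, we get an internal direct sum decomposition $V = \bigoplus_C \widehat{P}_C \oplus Z$ of $\ringone$-modules, and each summand is $W$-stable with the stated action, giving the claimed isomorphism.

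The main obstacle — really the only place one must be slightly careful — is justifying that the direct product decomposition $W = \prod_C W_C$ of \emph{Coxeter systems} coincides with the direct product decomposition of the \emph{group acting on $V$}, i.e. that an element of $W$ acting trivially on all of $\Span_{\ringone}(C)$ for its ``own'' component $C$ must already be trivial. This is where faithfulness is used: the lemma on disjoint $X_s$, $X_t$ shows the subgroups $W_C$ act on $V$ by commuting reflections supported on disjoint coordinate blocks, so the map $\prod_C W_C \to GL(V)$ is injective with image containing the image of $W$; combined with faithfulness of $W \to GL(V)$ and the fact that $W$ is generated by the $W_C$, one concludes $W \cong \prod_C W_C$ and the two decompositions agree. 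Everything else is bookkeeping with the normalized basis.
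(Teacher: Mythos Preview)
Your proposal is correct and follows essentially the same approach as the paper: invoke Corollary \ref{cor:itstypeA} (finiteness of $W$ forces every component to be a line), normalize the basis on each component, and set $Z = \Span_{\ringone} H$. Your extra paragraph justifying $W \cong \prod_C W_C$ is already handled in the paper just before the definition of $W_C$, and your parenthetical ``here it is an honest orthant, but $H$ may still be nonempty'' is slightly muddled (an orthant has $H = \emptyset$ by definition, so $Z = 0$; the paper's own proof wording suggests it may have intended almost-orthants), but neither point affects the correctness of your argument.
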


\begin{proof} This follows immediately from Corollary \ref{cor:itstypeA} and the discussion above, by normalizing the basis for each component, and letting $Z$ be the $\ringone$-span of $H$. \end{proof}

Now we discuss how to normalize affine-type components. Consider a component $C$ which is a lone double edge with vertices $\{x,y\}$ and edges $s$ and $t$. Replacing $y$ with $y'$ as
above, we can assume $s(x) = y$. Then $t(x) = by + \hbar_C$ for some $b \in \ringtwo_+^\times$ and $\hbar_C \in \Span_{\ringtwo} H$. If $\ringtwo = \Z$ then $b = 1$, and $t(y) = x -
\hbar_C$, exactly as in the affine permutation representation for $\tilde{A}_1$.

\begin{remark} If $\ringtwo \ne \Z$ then there are more options, e.g. $s(x) = y$ and $t(y) = qx$ where $q \in \R_+$ is neither $0$ nor $1$. This is a faithful realization of
$\tilde{A}_1$. In fact, it agrees with the linear part of $\tilde{R}_q$ as defined in the introduction, after specializing $q$ to a positive real number. One can make similar
realizations for $\tilde{A}_n$ with $n > 1$. \end{remark}

Now consider a component $C$ which is a circle with $n$ vertices for $n > 2$. After normalizing the basis for the first $n-1$ edges, we have $s_i(x_i) = x_{i+1}$ for all $1 \le i
\le n-1$, and $s_0(x_n) = b x_1 - \hbar$ for some $b$ and $\hbar$. As above, when $\ringtwo = \Z$ we know that $b=1$, and we recover the affine permutation representation. Note that the element $\hbar$ is the sum of the simple roots associated to the edges in $C$, so it can be picked out independent of the choice of basis; we may denote it $(\sum \al)_C$.

By normalizing each component, we have found a basis of the almost-orthant on which each $W_C$ acts as $P_C \oplus Z$, where $P_C$ is the (affine) permutation representation and
$Z$ is $W_C$-invariant. However, $V$ itself need not be the direct sum of the various $P_C$, since the elements $(\sum \al)_C$ associated to each component need not be linearly
independent.

\begin{thm} \label{thmwithmods} If $W$ is a Coxeter group acting faithfully on a realization $V$ and preserving an almost-orthant over $\ringtwo = \Z$, then $W$ is a finite product of symmetric groups
and affine Weyl groups $W_C$, one for each component of the transposition graph. Moreover, 
\[ V \cong (Z \oplus \bigoplus_C P_C)/K,\] where $P_C$ is the (affine) permutation
representation of $W_C$ (inflated from this subgroup to $W$), $Z$ is acted on trivially by $W$, and $K$ is the subspace of the $W$-invariants in $V$ described in the proof.
\end{thm}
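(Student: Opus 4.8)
The plan is to assemble the theorem from the pieces already established, with the only genuinely new ingredient being the identification of the invariant subspace $K$ along which we must quotient. By Corollary \ref{cor:itstypeA} (whose hypotheses hold since $\ringtwo = \Z$), the group $W$ is a product $\prod_C W_C$ of finite and affine type $A$ Coxeter systems, one per component of $\Gamma$, and by the preceding lemma the actions of $W_C$ and $W_{C'}$ on $V$ commute for $C \ne C'$ (distinct components have disjoint vertex sets, so every edge of $C$ is disjoint from every edge of $C'$). After normalizing the basis of each component as described above, the action of the whole group on the almost-orthant is described by: each $W_C$ permutes (finite case) or affinely permutes (circle or lone double edge case) its own block $X_C \subset X$, fixing every other basis vector in $X$ and every element of $H$. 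So on $\Span_\ringone(X \cup H)$ the action looks exactly like the action on $\bigoplus_C P_C \oplus Z'$, where $Z'$ is the $\ringone$-span of $H$ together with the vertices of $X$ lying in no edge (i.e. isolated vertices, where $W_C = S_1$), except that the several copies of the invariant line generated by $(\sum\al)_C$ inside the various $P_C$ need not be linearly independent in $V$.

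First I would make the surjection explicit. Let $P_C$ denote the abstract (affine) permutation representation over $\ringone$ of $W_C$, with its distinguished basis; for a line component this is the permutation module with basis the vertices, for a circle or lone double edge it additionally has the invariant vector $\hbar_C$, and the element $(\sum \al)_C$ sits inside the span of the vertex-basis. Let $Z$ be a free $\ringone$-module with basis indexed by $H$ together with the isolated vertices of $\Gamma$. There is an obvious $W$-equivariant map $\pi\co Z \oplus \bigoplus_C P_C \to V$ sending each abstract basis vector to the corresponding normalized element of $X \cup H$, and sending $\hbar_C$ (for affine $C$) to $(\sum\al)_C \in \Span_\ringone H \subset V$. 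This is surjective because its image contains $X$ and $H$, hence all of $V$. It is $W$-equivariant by construction: on each $P_C$ the normalized action is literally the (affine) permutation action, and $Z$ maps into the invariants.

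Second, I would identify $K := \ker \pi$. Since $\pi$ is an isomorphism on the span of all the non-$\hbar_C$ basis vectors (those map to a linearly independent set, namely $X$ together with $H$, which is a basis of $V$), the kernel is governed entirely by the relations among the invariant vectors. Concretely, $K$ is spanned by elements of the form $\hbar_C - v_C$, where $v_C \in Z \oplus \bigoplus_{C'} (\text{vertex part of } P_{C'})$ is the unique preimage of $(\sum\al)_C$ under the iso on that complementary summand — more precisely, $K$ is the kernel of the induced map on the invariant submodule $Z \oplus \bigoplus_{C \text{ affine}} \ringone\,\hbar_C \to \Span_\ringone H \oplus \Span_\ringone\{(\sum\al)_C\} \subset V$, extended by zero on the vertex parts. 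Because $\hbar_C$ maps into $\Span_\ringone H$ and each generator $\hbar_C$ of the source is $W$-invariant while $\pi$ kills nothing in the non-invariant directions, $K$ is a $W$-invariant submodule consisting of invariant vectors; this is exactly "the subspace of the $W$-invariants in $V$ described in the proof." Then $V \cong (Z \oplus \bigoplus_C P_C)/K$ follows from the first isomorphism theorem.

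The main obstacle, and the only subtle point, is verifying that $\pi$ is well-defined and $W$-equivariant \emph{with the normalizations performed simultaneously on all components} — i.e. that normalizing one component does not disturb the formulas on another. This was asserted in the paragraph preceding the theorem ("Normalizing the basis for one component will not affect the formulas for the action on any other component, in particular we never change elements of $H$"), and the reason is that the basis change used in normalization replaces a vertex $x_k$ of $C$ by $b x_k + \hbar$ with $\hbar \in \Span_\ringtwo H$, which only alters coordinates along $C$'s own vertices and along $H$; since edges of another component $C'$ act trivially on all of these (they act as rescaled shifted transpositions supported on $X_{C'}$ and fix $H$), their formulas are untouched. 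I would spell this out in a sentence or two. Everything else — that a line gives $S_n$, a circle $\tilde A_n$, a lone double edge $\tilde A_1$, and that $Z \oplus \hbar_C$'s carry the trivial action — is already in Corollary \ref{cor:itstypeA} and the normalization discussion, so the write-up is short.
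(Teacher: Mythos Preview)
Your proposal is correct and follows essentially the same route as the paper: normalize the basis on each component, build the surjection from $Z \oplus \bigoplus_C P_C$ onto $V$ by sending abstract basis vectors to their normalized counterparts and $\hbar_C \mapsto (\sum\al)_C$, and identify the kernel as the span of the elements $\hbar_C - v_C$ with $v_C$ the preimage of $(\sum\al)_C$ in $Z$. The only cosmetic difference is that you absorb the isolated vertices of $\Gamma$ into $Z$ rather than treating them as $P_C$ for $W_C = S_1$, which is harmless bookkeeping.
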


\begin{proof} Suppose $W$ preserves the almost-orthant spanned by $(X,H)$. Without loss of generality we can assume that $X$ is normalized on each component as above. Let $H'$ be a
set in bijection with $H$, and let $V$ be the formal $\ringone$-span of $H'$. We now define a $\ringone$-linear map $\psi$ from $Z \oplus \bigoplus_C P_C$ to $V$. We send $Z$
isomorphically to the span of $H$. Let the (affine) permutation representation $P_C$ be defined as in \S\ref{subsec:setup}. Then we send the elements $\{x_i\}$ in $P_C$ to the
normalized basis for the component $C$, and send $\hbar_C \in P_C$ to $(\sum \al)_C$. The map $\psi$ is surjective, because it contains $X \cup H$ in its image. A straightforward,
row-reduction argument verifies that the kernel of $\psi$ (denoted $K$ above) is spanned by the elements $k_C$, one for each affine-type component $C$. Here $k_C = \hbar_C - v_C$,
where $\hbar_C \in P_C$ is the sum of the simple roots, and $v_C$ is the preimage in $Z$ of $(\sum \al)_C \in \Span H$. \end{proof}

\begin{remark} The map $\psi$ is a quotient morphism of realizations of $(W,S)$. It sends the roots of $Z \oplus \bigoplus_C P_C$ to the roots of $V$. Moreover, the
coroots kill the kernel $K$ so they descend to linear functions on the quotient, where they agree with the coroots in $V$. \end{remark}

\bibliographystyle{plain}
\bibliography{mastercopy}

\end{document}